\setlist[enumerate]{topsep=0pt,itemsep=-1ex,partopsep=1ex,parsep=1ex}
\theoremstyle{plain}
\newtheorem{theo}{Theorem}[section]
\newtheorem{lemma}[theo]{Lemma}
\theoremstyle{definition}
\newtheorem{defn}[theo]{Definition}
\newcommand{\mc}[1]{\mathcal{#1}}
\newcommand{\mb}[1]{\mathbb{#1}}
\newcommand{\nib}[1]{\noindent {\bf #1}}
\newcommand{\bfl}[1]{\left\lfloor #1 \right\rfloor}
\newcommand{\sub}{\subseteq}
\newcommand{\sm}{\setminus}
\newcommand{\ov}{\overline}
\newcommand{\eps}{\varepsilon}
\newcommand{\es}{\emptyset}
\newcommand{\gG}{\gamma}
\newcommand{\dD}{\delta}
\newcommand{\GG}{\Gamma}
\newcommand{\DD}{\Delta}
\newcommand{\bE}{\mathbb{E}}
\newcommand{\bN}{\mathbb{N}}
\newcommand{\bP}{\mathbb{P}}
\newcommand{\cA}{\mathcal{A}}
\newcommand{\cB}{\mathcal{B}}
\newcommand{\cE}{\mathcal{E}}
\newcommand{\cF}{\mathcal{F}}
\newcommand{\cG}{\mathcal{G}}
\newcommand{\cI}{\mathcal{I}}
\newcommand{\cX}{\mathcal{X}}
\newcommand{\supp}{\text{supp}}
\title{Rainbow factors in hypergraphs}
\author{
Matthew Coulson\thanks{School of Mathematics, University of Birmingham, Birmingham, UK. Email: mjc685@bham.ac.uk.} \and
Peter Keevash\thanks{Mathematical Institute,
University of Oxford, Oxford, UK. Email: keevash@maths.ox.ac.uk.
\newline \hspace*{1.8em}Research supported
in part by ERC Consolidator Grant 647678.} \and
Guillem Perarnau\thanks{School of Mathematics, University of Birmingham, Birmingham, UK. Email: g.perarnau@bham.ac.uk.} \and
Liana Yepremyan\thanks{Mathematical Institute,
University of Oxford, Oxford, UK. Email: liana.yepremyan@maths.ox.ac.uk.
\newline \hspace*{1.8em}Research supported
in part by ERC Consolidator Grant 647678}
}
\begin{document}

\maketitle

\begin{abstract}
For any $r$-graph $H$, we consider the problem of 
finding a rainbow $H$-factor in an $r$-graph $G$ 
with large minimum $\ell$-degree and an edge-colouring
that is suitably bounded. We show that the asymptotic degree threshold
is the same as that for finding an $H$-factor.
\end{abstract}

\section{Introduction}

A fundamental question in Extremal Combinatorics is to determine conditions 
on a hypergraph $G$ that guarantee an embedded copy of some other hypergraph $H$.
The Tur\'an problem for an $r$-graph $H$ asks for the maximum number of edges 
in an $r$-graph $G$ on $n$ vertices; we usually think of $H$ as fixed and $n$ as large. 
For $r=2$ (ordinary graphs) this problem
is fairly well understood (except when $H$ is bipartite), but for general $r$
and general $H$ we do not even have an asymptotic understanding 
of the Tur\'an problem (see the survey \cite{Kturan}).
For example, a classical theorem of Mantel determines the maximum number of edges
in a triangle-free graph on $n$ vertices (it is $\bfl{n^2/4}$),
but we do not know even asymptotically the maximum number of edges
in a tetrahedron-free $3$-graph on $n$ vertices. 
On the other hand, if we seek to embed a spanning hypergraph 
then it is most natural to consider minimum degree conditions.
Such questions are known as Dirac-type problems, after the classical theorem
of Dirac that any graph on $n \ge 3$ vertices with minimum degree at least $n/2$
contains a Hamilton cycle. There is a large literature on such problems for graphs
and hypergraphs, surveyed in \cite{KOicm, KOlarge,RR,yi}.

One of these problems, finding hypergraph factors, 
will be our topic for the remainder of this paper.
To describe it we introduce some notation and terminology.
Let $G$ be an $r$-graph on $[n]=\{1,\dots,n\}$.
For any $L \sub V(G)$ the degree of $L$ in $G$
is the number of edges of $G$ containing $L$.
The minimum $\ell$-degree $\dD_\ell(G)$ is
the minimum degree in $G$ over all $L \sub V(G)$ of size $\ell$.
Let $H$ be an $r$-graph with $|V(H)|=h \mid n$. 
A partial $H$-factor $F$ in $G$ of size $m$ 
is a set of $m$ vertex-disjoint copies of $H$ in $G$.
If $m=n/h$ we call $F$ an $H$-factor.
We let $\dD_\ell(H,n)$ be the minimum $\dD$ such that
$\dD_\ell(G) > \dD n^{r-\ell}$ guarantees an $H$-factor in $G$.
Then the asymptotic $\ell$-degree threshold for $H$-factors is 
\[ \dD^*_\ell(H):= \liminf_{m\to \infty} \dD_\ell(H, mh)\;. \]
 We refer to Section 2.1 in~\cite{yi} for a summary of the known bounds on $\dD^*_\ell(H)$.
As for the Tur\'an problem, 
$\dD^*_\ell(H)$ is well-understood for graphs~\cite{KSSalonyus,KOfactors},
but there are few results for hypergraphs.
Even for perfect matchings (the case when $H$ is a single edge)
there are many cases for which
the problem remains open (this is closely connected to the
Erd\H{o}s Matching Conjecture \cite{ematch}).

Let us now introduce colours on the edges of $G$ and ask for 
conditions under which we can embed a copy of $H$ that is
{\em rainbow}, meaning that its edges have distinct colours.
Besides being a natural problem in its own right,
this general framework also encodes many other 
combinatorial problems. Perhaps the most well-known of these
is the Ryser-Brualdi-Stein Conjecture \cite{brualdi,ryser,stein}
on transversals in latin squares, which is equivalent 
to saying that any proper edge-colouring of the complete bipartite 
graph $K_{n,n}$ has a rainbow matching of size $n-1$.
There are several other well-known open problems that can be
encoded as finding certain rainbow subgraphs in graphs with an
edge-colouring that is locally $k$-bounded for some $k$,
meaning that each vertex is in at most $k$ edges of any given colour
(so $k=1$ is proper colouring). For example, a recent result 
of Montgomery, Pokrovskiy and Sudakov \cite{MPS}
shows that any locally $k$-bounded edge-colouring of $K_n$
contains a rainbow copy of any tree of size at most $n/k - o(n)$,
and this implies asymptotic solutions to the conjectures 
of Ringel \cite{R} on decompositions by trees and of
Graham and Sloane \cite{GS} on harmonious labellings of trees.

We now consider rainbow versions of the extremal problems discussed above. 
The rainbow Tur\'an problem for an $r$-graph $H$ is to determine the maximum 
number of edges in a properly edge-coloured $r$-graph $G$ on $n$ vertices
with no rainbow $H$. This problem was introduced by
Keevash, Mubayi, Sudakov and Verstra\"ete \cite{KMSV},
who were mainly concerned with degenerate Tur\'an problems
(the case of even cycles encodes a problem from Number Theory),
but also observed that a simple supersaturation argument
shows that the threshold for non-degenerate rainbow Tur\'an problems
is asymptotically the same as that for the usual Tur\'an problem,
even if we consider locally $o(n)$-bounded edge-colourings.

For Dirac-type problems, it seems reasonable to make stronger assumptions
on our colourings, as we have already noted that even locally bounded
colourings of complete graphs encode many problems that are still open.
For example, Erd\H{o}s and Spencer \cite{ES} showed the existence
of a rainbow perfect matching in any edge-colouring of $K_{n,n}$
that is $(n-1)/16$-bounded, meaning that are at most $(n-1)/16$
edges of any given colour. Coulson and Perarnau \cite{CG} recently
obtained a Dirac-type version of this result, showing that
any $o(n)$-bounded edge-colouring of a subgraph of $K_{n,n}$
with minimum degree at least $n/2$ has a rainbow perfect matching.
One could consider this a `local resilience' version
(as in \cite{SV}) of the Erd\H{o}s-Spencer theorem.
This is suggestive of a more general phenomenon, 
namely that for any Dirac-type problem, the rainbow problem 
for bounded colourings should have asymptotically the same degree threshold 
as the problem with no colours. A result of Yuster~\cite{yuster}
on $H$-factors in graphs adds further evidence (but only for the weaker
property of finding an $H$-factor in which each copy of $H$ is rainbow).
For graph problems, the general phenomenon was
recently confirmed in considerable generality by Glock and Joos \cite{GJ},
who proved a rainbow version of the blow-up lemma of
Koml\'os, S\'ark\"ozy and Szemer\'edi \cite{KSS}
and the Bandwidth Theorem of B\"ottcher, Schacht and Taraz \cite{BST}.

Our main result establishes the same phenomenon for hypergraph factors. 
We will use the following boundedness assumption for our colourings,
in which we include the natural $r$-graph generalisations of
both the local boundedness and boundedness assumptions from above
(for $r=2$ boundedness implies local boundedness,
but in general they are incomparable assumptions).

\begin{defn} \label{def:bdd}
An edge-colouring of an $r$-graph on $n$ vertices
is $\mu$-\emph{bounded} if for every colour $c$:
\begin{enumerate}
\item  there are at most $\mu n^{r-1}$ edges of colour $c$,
\item  for any set $I$ of $r-1$ vertices, 
there are at most $\mu n$ edges of colour $c$ containing $I$.
\end{enumerate}
\end{defn}

Note that we cannot expect any result without some ``global'' condition as in Definition \ref{def:bdd}.i, since any $H$-factor contains linearly many edges. Similarly, some ``local'' condition along the lines of Definition \ref{def:bdd}.ii is also needed.
Indeed, consider the edge-colouring of the complete $r$-graph $K^r_n$
by $\tbinom{n}{r-1}$ colours identified with $(r-1)$-subsets of $[n]$,
where each edge is coloured by its $r-1$ smallest elements.
Suppose $H$ has the property that every $(r-1)$-subset of $V(H)$
is contained in at least $2$ edges of $H$ (e.g.\ suppose $H$ is also complete).
Then there are fewer than $n$ edges of any given colour, but
there is no rainbow copy of $H$ (let alone an $H$-factor), as in any 
embedding of $H$ all edges containing the $r-1$ smallest elements have the same colour.

Our main theorem is as follows (we use the notation $a \ll b$ to mean that
for any $b>0$ there is some $a_0>0$ such that the statement holds for $0<a<a_0$).

\begin{theo}
\label{mainthm}
Let $1/n \ll \mu \ll \eps \ll 1/h \leq 1/r <1/\ell \leq 1$ with $h|n$. 
Let $H$ be an $r$-graph on $h$ vertices 
and $G$ be an $r$-graph on $n$ vertices 
with $\dD_\ell(G) \geq (\dD^*_\ell(H)+\eps)n^{r-\ell}$. 
Then any $\mu$-bounded edge-colouring of $G$ admits a rainbow $H$-factor.
\end{theo}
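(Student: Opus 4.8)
The plan is to combine the known (non-rainbow) $H$-factor threshold with an absorption-type argument, adapted to handle the colour constraint via a random sparsification (``colour splitting'') followed by a greedy/rainbow absorption argument. Here is the skeleton.

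\medskip

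\textbf{Step 1: Reduction via a random colour subset.} Since $\dD_\ell(G) \geq (\dD^*_\ell(H)+\eps)n^{r-\ell}$, the definition of $\dD^*_\ell(H)$ already guarantees (uncoloured) $H$-factors in $G$, and more importantly, in suitable subgraphs of $G$ that retain almost all of the degree. The first move is to preprocess the colouring. Partition the colour set randomly (or select a random subset of colours), so that we obtain a sub-$r$-graph $G' \sub G$ using only ``few'' colours on each relevant local structure, while $\dD_\ell(G')$ is still at least $(\dD^*_\ell(H)+\eps/2)n^{r-\ell}$ with high probability; the $\mu$-boundedness (both the global bound \ref{def:bdd}.i and the local bound \ref{def:bdd}.ii) is exactly what makes such a random deletion preserve degrees, by a Chernoff/Azuma concentration argument. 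This reduces us to a setting where conflicts between copies of $H$ (two copies wanting the same colour) are rare.

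\medskip

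\textbf{Step 2: Absorption structure.} Build an absorbing set of vertex-disjoint ``absorbers'' using a random greedy / iterated-probabilistic argument: for most small vertex sets $S$ of size $h$, find many gadgets that can flexibly incorporate $S$ into an $H$-factor of a slightly larger vertex set, where all the copies of $H$ used are rainbow \emph{and} use disjoint colour sets. The existence of these gadgets follows from the fact that the degree condition is preserved on random subsets (Step 1) together with the standard supersaturation that produces $\OO(n^h)$ copies of $H$ through any typical structure; rainbow-ness of a random such copy holds with positive probability because $\mu \ll \eps$ forces colour collisions within a single copy of $H$ to be rare (there are at most $h^2$ pairs of edges, each colliding with probability $O(\mu)$). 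Reserve a small linear-sized absorbing structure $A$ with these properties. This is the step I expect to be the main obstacle: one must simultaneously control three things --- vertex-disjointness of the gadgets, rainbow-ness of each copy of $H$, and colour-disjointness \emph{across} all copies used, both inside $A$ and between $A$ and the eventual almost-factor --- and the local boundedness condition \ref{def:bdd}.ii is essential to ensure that after committing to a partial rainbow structure, only $o(n)$ edges at each vertex are ``blocked'' by already-used colours, so the residual $r$-graph still satisfies a degree condition above $\dD^*_\ell(H)$.

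\medskip

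\textbf{Step 3: Almost-factor plus absorption.} On the vertex set $V(G) \sm V(A)$ (with $A$ chosen so $h$ still divides the relevant counts), apply the definition of $\dD^*_\ell(H)$ to the subgraph obtained by deleting all colours used inside $A$: since only $o(n^{r-1})$ edges are deleted globally and $o(n)$ at each $(r-1)$-set, the $\ell$-degree remains above $(\dD^*_\ell(H) + \eps/4)n^{r-\ell}$, so there is an $H$-factor $F_0$ of all but at most $o(n)$ leftover vertices --- but this $F_0$ need not be rainbow. To fix this, iterate a ``rainbow switching'' argument: repeatedly replace a non-rainbow copy of $H$ in the current partial factor by a rainbow one found through the same supersaturation, using a colour not yet used and avoiding already-used vertices; because collisions are rare ($\mu$-boundedness) this process terminates with a \emph{rainbow} partial $H$-factor $F_1$ covering all but $o(n)$ vertices, still avoiding the colours of $A$. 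Finally, feed the $o(n)$ uncovered vertices into the absorbing structure $A$: by construction $A$ absorbs any such small set into a rainbow $H$-factor of $V(A) \cup (\text{leftover})$ using only the colours reserved for $A$, which are disjoint from those used by $F_1$. The union $F_1 \cup (\text{absorbed part})$ is then a rainbow $H$-factor of $G$. A cleaner alternative to the switching step is to set up the absorbers in Step 2 to also absorb a bounded number of ``bad'' (non-rainbow) copies of $H$, so that $F_0$ can be passed to $A$ directly; either route completes the proof.
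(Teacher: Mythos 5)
Your proposal takes a genuinely different route from the paper (which follows Erd\H{o}s--Spencer: apply the lopsided local lemma to a uniformly random $H$-factor and verify its hypotheses by counting switchings), but it has a gap that is structural rather than technical, and it sits exactly at the point you flag as "the main obstacle". The crux is this: whenever you forbid the colours already used by some partial structure and then ask the residual $r$-graph to still satisfy the $\ell$-degree condition, you need the number of blocked edges through each $\ell$-set --- in particular through each $(r-1)$-set when $\ell=r-1$ --- to be $o(n^{r-\ell})$. Definition \ref{def:bdd}.ii only bounds the edges of a \emph{single} colour through a fixed $(r-1)$-set $I$ by $\mu n$, which is a constant fraction $\mu$ of the at most $n$ edges through $I$. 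Hence forbidding $k$ colours can block $k\mu n$ edges at $I$, and this bound is already vacuous once $k\ge 1/\mu$, a bounded number of colours. Your Step 3 asserts that deleting the colours of $A$ removes only "$o(n)$ at each $(r-1)$-set", but an absorber capable of swallowing an arbitrary leftover set of size $o(n)$ must contain $\omega(1)$ (in fact typically $\Theta(n)$) copies of $H$ and hence far more than $1/\mu$ colours, so the claim is unjustified and in general false. The same issue sinks the "rainbow switching" iteration: once the partial rainbow factor contains $\Theta(n)$ copies of $H$ it occupies $\Theta(n)$ colours, and no degree count of the above form shows that a new copy avoiding all of them exists. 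Step 1 also does not deliver what is claimed: keeping a random subset of colours deletes a proportional fraction of the edges through every $\ell$-set, pushing the degree below $\dD^*_\ell(H)$ unless the retained fraction is close to $1$, in which case conflicts have not become rare.

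The paper's proof is built precisely to avoid ever having to "delete all used colours globally". Global colour-disjointness is handled probabilistically by the local lemma, and the only place where blocked edges must be controlled is inside a switching window $V(X)$ consisting of $m=O(1)$ copies of $H$ chosen at random from the current factor: the suitability condition (Definition \ref{def:suit}.i) guarantees that for each transverse $(r-1)$-set $I$ in the window, at most an $\eps/4$-fraction of the $m$ candidate extensions are colour-blocked, and Lemma \ref{shuffle} shows this holds for a random window because the $\le \eps n/10h$ globally bad extensions of a typical $I$ are diluted by the sampling. That dilution mechanism is exactly what your global deletion argument lacks. An absorption route is not obviously hopeless, but it would require a comparably delicate local control of blocked colours (as in the graph results of Coulson--Perarnau and Glock--Joos), not a union bound over used colours.
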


Throughout the remainder of the paper we fix $\ell$, $r$, $h$, $\eps$, $\mu$, 
$n$, $H$ and $G$ as in the statement of Theorem \ref{mainthm}.
We also fix an integer $m$ with $\mu \ll 1/m \ll \eps$
and define $\gG = (mh)^{-m}$.

\section{Proof modulo lemmas}

The outline of the proof of Theorem \ref{mainthm} 
is the same as that given by Erd\H{o}s and Spencer \cite{ES}
for the existence of Latin transversals:
we consider a uniformly random $H$-factor $\mc{H}$ in $G$
(there is at least one by definition of $\dD^*_\ell(H)$)
and apply the Lopsided Lov\'asz Local Lemma (Lemma \ref{L4})
to show that $\mc{H}$ is rainbow with positive probability.
We will show that the local lemma hypotheses hold
if there are many feasible switchings, defined as follows.

\begin{defn} \label{def:feas}
Let $F_0$ be an $H$-factor in $G$ and $H_0 \in F_0$.
An $(H_0,F_0)$-switching is a partial $H$-factor 
$Y$ in $G$ with $V(H_0) \sub V(Y)$ such that
\begin{enumerate}
\item for each $H' \in F_0$
we have $V(H') \sub V(Y)$ or $V(H') \cap V(Y) = \es$, and 
\item each $H' \in Y$ shares at most one vertex with $H_0$. 
\end{enumerate}
Let $Y'$ be obtained from $Y$ by deleting 
all vertices in $V(H_0)$ and their incident edges.
We call $Y$ feasible if $Y'$ is rainbow and does not 
share any colour with any $H' \in F_0 \sm V(Y)$.
\end{defn}

The following lemma, proved in Section \ref{sec:key},
reduces the proof of Theorem \ref{mainthm}
to showing the existence of many feasible switchings.

\begin{lemma} \label{keylemma}
Suppose that for every 
$H$-factor $F_0$ of $G$ and $H_0 \in F_0$ there are at least 
$\gG n^{m-1}$ feasible $(H_0,F_0)$-switchings of size $m$.
Then $G$ has a rainbow $H$-factor.
\end{lemma}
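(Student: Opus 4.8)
The plan is to apply the Lopsided Lov\'asz Local Lemma (Lemma \ref{L4}) to a uniformly random $H$-factor $\mc{H}$ of $G$, using the feasible switchings to verify the hypotheses. Concretely, for each pair of edges $e, e'$ belonging to distinct copies of $H$ in a fixed underlying $H$-factor structure that receive the same colour, we define a ``bad event'' $A_{e,e'}$ that $\mc{H}$ contains both $e$ and $e'$. If no bad event occurs then $\mc{H}$ is rainbow, so it suffices to show $\bP\brak{\bigcap \ov{A_{e,e'}}} > 0$. The first step is to set up the dependency (lopsidependency) graph: $A_{e,e'}$ should be made (lopsi-)independent of the family of events $A_{f,f'}$ whose vertex sets are disjoint from $V(e) \cup V(e')$, using the standard fact that conditioning a uniform random $H$-factor on containing a fixed partial factor, and then resampling away from a vertex set, only helps the probability of a disjoint copy appearing — this is where the ``lopsided'' version is essential.

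The second step, and the technical heart, is to bound $\bP\brak{A_{e,e'}}$ and to choose the local lemma weights $x_{e,e'}$. Here is where feasible switchings enter: to estimate the probability that $\mc{H}$ uses a specific copy $H_0$, one compares, via a switching argument, the number of $H$-factors containing $H_0$ to the number not containing it. Given any $H$-factor $F_0$ with $H_0 \in F_0$, each feasible $(H_0, F_0)$-switching $Y$ of size $m$ yields (after deleting $V(H_0)$ and re-covering the freed-up vertices — which is possible precisely because $Y' = Y \setminus V(H_0)$ is a rainbow partial factor avoiding the colours of $F_0 \setminus V(Y)$, so the result is again a rainbow-compatible $H$-factor) an $H$-factor avoiding $H_0$; the hypothesis of at least $\gG n^{m-1}$ such switchings, against an upper bound of roughly $O(n^m)$ for the reverse map's fibres, shows that a $1 - \Omega(\gG/n \cdot \ldots)$ fraction of the relevant factors avoid $H_0$, equivalently $\bP\brak{H_0 \in \mc{H}}$ is small. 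Plugging this into the product $\bP\brak{A_{e,e'}} \le \bP\brak{H_0(e) \in \mc{H}} \cdot \bP\brak{H_0(e') \in \mc{H} \mid H_0(e) \in \mc{H}}$ and using $\mu$-boundedness to count the total number of bad events incident to a given copy of $H$ (at most $O(\mu n^{h-1})$ same-colour partners, say), one checks the Lov\'asz Local Lemma inequality $\bP\brak{A_{e,e'}} \le x_{e,e'} \prod (1 - x_{f,f'})$ with a uniform choice $x_{e,e'} = \TT(\mu)$ or similar.

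The main obstacle I expect is the switching-counting estimate: one must show not only that feasible switchings are plentiful (this is the hypothesis, handled elsewhere) but that the many-to-one map from (factor containing $H_0$, feasible switching) to (factor avoiding $H_0$) does not overcount too badly, i.e. that each $H$-factor avoiding $H_0$ arises from at most $\mathrm{poly}(n) \cdot n^{m-1} \cdot (\text{small})$ pairs, so that the ratio genuinely forces $\bP\brak{H_0 \in \mc{H}} = O(1/(\gG n))$ or whatever is needed to beat the $\mu n^{h-1}$ dependency count. This requires carefully reconstructing $H_0$ and the switched-out copies from the image factor, using conditions (i) and (ii) of Definition \ref{def:feas} (the copies of $Y$ meet $H_0$ in at most one vertex each, and $Y$ is a union of $F_0$-copies plus $H_0$), so that the ``footprint'' of the switch is recoverable up to a bounded number of choices. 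Once that estimate is in hand, assembling the local lemma computation is routine given $1/n \ll \mu \ll \eps \ll 1/h$, and positivity of $\bP\brak{\bigcap \ov{A_{e,e'}}}$ yields the desired rainbow $H$-factor. \qed
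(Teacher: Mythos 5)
Your overall strategy is the paper's: apply the lopsided local lemma (Lemma \ref{L4}) to a uniformly random $H$-factor, bound the dependency degrees via $\mu$-boundedness, and bound the event probabilities by double-counting along feasible switchings, with the fibre bound for the reverse map supplying the denominator. However, there are two concrete problems with the proposal as written.

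First, your family of bad events is too small. You only take events $A_{e,e'}$ for same-coloured edges $e,e'$ lying in \emph{distinct} copies of $H$, but a single copy $H_0$ of $H$ may itself contain two edges of the same colour, and then $H_0\in\mc{H}$ already ruins rainbowness without triggering any of your events. So the claim ``if no bad event occurs then $\mc{H}$ is rainbow'' fails. The paper remedies this by including a second family of events $A(e,f,H_0)$ (same-coloured $e,f$ inside one copy $H_0$, the event being $H_0\in\mc{H}$) alongside the two-copy events; these are in fact the easier case, needing only one switching rather than two.

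Second, and more seriously, you establish the lopsidependency condition by appealing to a ``standard fact'' that conditioning on avoiding events supported away from $V(e)\cup V(e')$ only helps. No such fact is available: for uniform random factors, containment events on disjoint vertex sets need not be negatively correlated, and bounding the unconditional probability $\bP[H_0\in\mc{H}]$ does not give the required bound on $\bP[E\mid\cap_{E'\in\cE'}\ov{E'}]$. The correct (and the paper's) route is to run the switching double count \emph{inside the conditioned space} $\cF$ of factors satisfying $\cap_{E'\in\cE'}\ov{E'}$: one needs every feasible switching applied to $F_0\in\cF$ to land back in $\cF$, and this is exactly what the colour conditions in Definition \ref{def:feas} (that $Y'$ is rainbow and shares no colour with $F_0\sm V(Y)$) are for. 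In your write-up, feasibility is invoked only to say the switched object is ``rainbow-compatible,'' while the conditional bound is delegated to the false correlation fact; merging these two steps is the actual content of the lemma. Two further points to tidy up when doing so: the supports should be the full vertex sets $V(H_1)\cup V(H_2)$, not $V(e)\cup V(e')$, or else the switching must respect conditioned events overlapping the copies; and for the two-copy events the second switching must avoid $V(H_1)$ and start from an intermediate factor containing $H_2$, which costs only $O(n^{m-2})$ of the $\gG n^{m-1}$ switchings.
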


We will construct switchings by randomly choosing 
some copies of $H$ from $F_0$ and considering 
a random transverse partition in the sense 
of the following definition.

\begin{defn} \label{def:suit}
Let $F_0$ be an $H$-factor in $G$ and $H_0 \in F_0$. Let $X \sub F_0$ be a partial $H$-factor in $G$ with $H_0 \in X$. We call $S \sub V(X)$ transverse
if $|H' \cap S| \le 1$ for all $H' \in X$.
We call a partition of $V(X)$ transverse if each part is transverse. 
For any edges $e$ and $f$ let 
$X(e,f) = \{H' \in X: |V(H') \cap (e \cup f)| \ge 2\}$.
We call $X$ suitable if
\begin{enumerate}
\item for any transverse $I \sub V(X) \sm V(H_0)$ with $|I|=r-1$
there are at most $\eps |X|/4$ vertices $v \in V(X)$ such that 
$I \cup \{v\} \in E(G)$ shares a colour with some $H' \in F_0$, and
\item for any transverse edges $e$ and $f$ disjoint from $V(H_0)$
of the same colour we have $X(e,f) \ne \es$, and furthermore
if $e \cap f = \es$ then $|X(e,f)| \ge 2$.
\end{enumerate}
\end{defn}

The following lemma, proved in Section \ref{sec:feas},
shows that a suitable partial $H$-factor has
an associated feasible switching if it has 
a transverse partition whose parts each satisfy
the minimum degree condition for an $H$-factor.

\begin{lemma} \label{feas}
Let $F_0$, $H_0$ and $X$ be as in Definition \ref{def:suit}, 
suppose $X$ is suitable and $|X|=m$.
Let $\mc{P} = (V_1,\dots,V_h)$ be a transverse partition of $V(X)$
and suppose $\dD_\ell(G[V_i]) \geq (\dD^*_\ell(H) + \eps/2) m^{r-\ell}$
for all $i \in [h]$.
Then there is a partial $H$-factor $Y$ in $G$ with $V(Y)=V(X)$
such that $Y$ is a feasible $(H_0,F_0)$-switching.
\end{lemma}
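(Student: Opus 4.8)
The plan is to build the switching as $Y=\bigcup_{i=1}^h F_i$, with each $F_i$ an $H$-factor of a subgraph $G_i\sub G[V_i]$ obtained by deleting a small set of ``dangerous'' edges. The first point is that this shape of $Y$ essentially forces the combinatorial part of Definition~\ref{def:feas}. Since $\mc{P}$ is a transverse partition of $V(X)$ and $|X|=m$, each part has $|V_i|=m$ and meets $V(H_0)$ in a single vertex (I assume $h\mid m$, which holds by the choice of $m$); hence every copy in any $H$-factor of $G[V_i]$ meets $V(H_0)$ in at most one vertex, so $Y$ is automatically a partial $H$-factor with $V(Y)=V(X)$ satisfying Definition~\ref{def:feas}(i)--(ii). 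Moreover the edges of the associated $Y'$ are precisely the edges of the $F_i$ that avoid $V(H_0)$, each lying inside one part. So the task reduces to choosing the $F_i$ so that these edges are pairwise differently coloured and avoid $C^*$, the set of colours on the copies in $F_0\sm X$ (these being exactly the copies in $F_0\sm V(Y)$).

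The within-part condition is automatic: by Definition~\ref{def:suit}.ii two edges $e,f$ of $G[V_i]$ that avoid $V(H_0)$ cannot be monochromatic, as they are transverse with $e\cup f\sub V_i$ meeting each copy of $X$ at most once, so $X(e,f)=\es$. Thus the edges of any $F_i$ avoiding $V(H_0)$ are rainbow for free, and I only need: (a) they avoid $C^*$; and (b) for $i<j$, no such edge of $F_j$ repeats a colour of such an edge of $F_i$. So I would construct $F_1,\dots,F_h$ greedily: having chosen $F_1,\dots,F_{i-1}$, let $\Phi_i$ be the finite set of colours they use on edges avoiding $V(H_0)$, let $G_i:=G[V_i]$ minus every edge that avoids $V(H_0)$ and whose colour lies in $C^*\cup\Phi_i$, and take $F_i$ to be any $H$-factor of $G_i$.

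The crux, and the step I expect to be the main obstacle, is to show $\dD_\ell(G_i)\ge(\dD^*_\ell(H)+\eps/6)m^{r-\ell}$; then $F_i$ exists by the definition of $\dD^*_\ell(H)$ (as in the proof outline, using that $m$ is suitably large), and since this bound does not depend on the earlier $F_1,\dots,F_{i-1}$ the greedy process never gets stuck. No deleted edge meets $V(H_0)$, so I fix an $\ell$-set $L\sub V_i$ with $L\cap V(H_0)=\es$ and bound the deleted edges containing $L$. First, edges of colour in $C^*$: for each of the at most $m^{r-1-\ell}$ transverse $(r-1)$-sets $I$ with $L\sub I\sub V_i$ and $I\cap V(H_0)=\es$, Definition~\ref{def:suit}.i bounds by $\eps m/4$ the vertices $v$ with $I\cup\{v\}\in E(G)$ of colour in $C^*$ (these colours occur on $F_0$), giving at most $(\eps/4)m^{r-\ell}$ deleted edges through $L$ of this kind. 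Second, edges of colour in $\Phi_i$: suppose $L\cup\{v\}$ is deleted because its colour equals that of an edge $f\in F_j$ (some $j<i$) avoiding $V(H_0)$. Then $L\cup\{v\}$ and $f$ are transverse, disjoint (different parts) and monochromatic, so Definition~\ref{def:suit}.ii gives $|X(L\cup\{v\},f)|\ge 2$; since the only copy of $X$ meeting $L\cup\{v\}$ but not $L$ is the one through $v$, at least one copy $K$ of $X$ meeting $L$ also meets $f$, i.e.\ $f$ contains the unique vertex of $K$ in $f$'s part. Only $r-1$ copies of $X$ meet $L$, so there are at most $(r-1)(h-1)|E(H)|$ choices of such an $f$, and for each there is at most one edge of $G[V_i]$ avoiding $V(H_0)$ of colour $c(f)$ (again by Definition~\ref{def:suit}.ii), hence at most one such $v$. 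So this kind contributes at most $(r-1)(h-1)|E(H)|$ deleted edges through $L$, which is $o(m^{r-\ell})$; altogether $\dD_\ell(G[V_i])$ drops by at most $(\eps/4+o(1))m^{r-\ell}\le(\eps/3)m^{r-\ell}$, which gives the bound.

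Finally $Y=\bigcup_i F_i$ is a partial $H$-factor on $V(X)$ satisfying Definition~\ref{def:feas}(i)--(ii), and $Y'$ is rainbow (within parts by Definition~\ref{def:suit}.ii, across parts by the choice of $G_i$) and avoids $C^*$; so $Y$ is a feasible $(H_0,F_0)$-switching. I would stress that Definition~\ref{def:suit}.ii is exactly what makes this work: forbidding outright \emph{all} colours used by $F_1,\dots,F_{i-1}$ would forbid $\Theta(m)$ edges of $G[V_i]$ with no control on how many meet a given $(r-1)$-set, which would be fatal when $\ell=r-1$ (where we can afford to lose only $o(m)$ edges per $(r-1)$-set); Definition~\ref{def:suit}.ii pins each forbidden edge through $L$ to one of the $r-1$ copies of $X$ meeting $L$, keeping the loss $O(1)$ per $\ell$-set.
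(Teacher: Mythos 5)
Your proposal is correct and follows essentially the same route as the paper: build $Y$ part by part as $H$-factors of $G[V_i]$ after deleting edges whose colours clash with $F_0$ or with previously chosen parts, use Definition \ref{def:suit}.i to control the first kind of deletion, the within-part rainbow property from Definition \ref{def:suit}.ii for free, and the $|X(e,f)|\ge 2$ clause of Definition \ref{def:suit}.ii to pin cross-part clashes to the $r-1$ copies of $X$ through $L$ in the critical case $\ell=r-1$ (with the crude $O(m)=o(m^{r-\ell})$ count handling $\ell\le r-2$, as you note implicitly). The only cosmetic difference is that you forbid only the colours of $F_0\setminus X$ rather than all of $F_0$; both work.
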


The following lemma, proved in Section \ref{sec:shuffle},
gives a lower bound on the number of partial $H$-factors $X$ with
some transverse partition $\mc{P}$ satisfying the conditions of the previous lemma.

\begin{lemma} \label{shuffle}
Let $F_0$ be an $H$-factor in $G$ and $H_0 \in F_0$.
Let $X \sub F_0$ be a random partial $H$-factor 
where $H_0 \in X$ and each $H' \in F_0 \sm \{H_0\}$
is included independently with probability $p=\frac{m}{n/h-1}$.
Let $\mc{P} = (V_1,\dots,V_h)$ be a uniformly random
transverse partition of $V(X)$.
Then with probability at least $1/m$ 
we have $X$ suitable, $|X|=m$ and all
$\dD_\ell(G[V_i]) \geq (\dD^*_\ell(H) + \eps/2) m^{r-\ell}$.
\end{lemma}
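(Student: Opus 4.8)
The plan is to analyze the three events separately and show each fails with probability at most (say) $1/(3m)$, then take a union bound; the factor $1/m$ in the statement is generous, so we have room to spare. First, the size event: $|X| = m$. Since $|X| - 1 \sim \mathrm{Bin}(n/h - 1, p)$ with $p = m/(n/h-1)$, the expectation of $|X|-1$ is exactly $m-1$, and a standard anticoncentration bound (e.g.\ Stirling applied to the binomial, or the local central limit theorem) gives $\bP(|X| = m) \ge c/\sqrt{m}$ for an absolute constant $c$. Since $1/m \ll \eps$ we may first choose $m$ large; in fact it is cleaner to condition on $|X| = m$ throughout and argue that conditionally the other two events hold with probability $\ge 1 - o(1)$, so that unconditionally all three hold with probability $\ge (c/\sqrt{m})(1-o(1)) \ge 1/m$.

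Second, the suitability event (Definition \ref{def:suit}). For part (i), fix a transverse $(r-1)$-set $I \subseteq V(X) \sm V(H_0)$: we must bound the number of $v$ with $I \cup \{v\} \in E(G)$ sharing a colour with some $H' \in F_0$. Here the $\mu$-boundedness is the key input: for each colour $c$, condition (ii) of Definition \ref{def:bdd} says at most $\mu n$ edges of colour $c$ contain $I$, so at most $\mu n$ vertices $v$ give an edge $I\cup\{v\}$ of colour $c$; summing over the at most $h \cdot (n/h) = n$ edge-colours appearing on $F_0$, crudely at most $h n \cdot \mu n$ pairs — too weak directly. Instead one should count more carefully: the relevant $v$ lie in $V(X)$, and a union bound over the $\le m$ copies $H'' \ni v$ in $X$ together with the colours of $F_0$; the right estimate is that the \emph{expected} number of such $v$ (over the random choice of $X$) is $O(\mu m) \cdot m \ll \eps m /4$, and then a concentration/union-bound argument over all transverse $I$ (there are at most $(hm)^{r-1}$ of them) finishes part (i). For part (ii), fix transverse same-coloured edges $e, f$ disjoint from $V(H_0)$; we need $X(e,f) \ne \es$, i.e.\ some $H' \in X$ meets $e \cup f$ in $\ge 2$ vertices (and $\ge 2$ such $H'$ when $e \cap f = \es$). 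Crucially $e, f$ transverse means each meets any copy of $F_0$ in $\le 1$ vertex, so a copy meeting $e \cup f$ in $\ge 2$ vertices must contain one vertex of $e$ and one of $f$; since $e$ and $f$ share a colour and the colouring is $\mu$-bounded (part (i) of Definition \ref{def:bdd} bounds colour classes, part (ii) bounds local multiplicity), the pair $(e,f)$ ranges over a restricted family, and for the \emph{fixed} $H$-factor $F_0$ the structure forces the vertex of $e$ and the vertex of $f$ to lie in a bounded number of $F_0$-copies; each such copy is independently included in $X$ with probability $\sim m h / n$, so $\bP(X(e,f)=\es)$ is tiny unless... — actually here one must be careful: the condition must hold for \emph{all} such $e,f$, and there can be up to $\sim n^{r-1}$ colour classes, so a naive union bound fails. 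The resolution is that suitability (ii) is a property of $F_0$ together with the colouring that can fail for $X$ only via pairs $(e,f)$ whose two "linking" $F_0$-copies were \emph{not} selected; one bounds the number of "bad" pairs $(e,f)$ by $\mu$-boundedness (at most $O(\mu n^{r-1})$ per the global bound, times local factors) and shows the expected number of bad pairs surviving in $X$ is $o(1)$.

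Third, the degree event: each $\dD_\ell(G[V_i]) \ge (\dD^*_\ell(H) + \eps/2) m^{r-\ell}$. Condition on $|X| = m$. The partition $\mc{P}$ is a uniformly random transverse partition of $V(X)$ — equivalently, independently for each copy $H' \in X$ choose a uniformly random bijection from its $h$ vertices to the $h$ parts, so each vertex of $H'$ lands in a uniformly random distinct part. Fix $i$ and fix an $\ell$-set $L \subseteq V_i$ (conditionally this is a random set of size $\ell$). Its degree in $G[V_i]$ counts $(r-\ell)$-sets $R$ with $L \cup R \in E(G)$ and $R \subseteq V_i$. Since $\dD_\ell(G) \ge (\dD^*_\ell(H)+\eps) n^{r-\ell}$, there are $\ge (\dD^*_\ell(H)+\eps) n^{r-\ell}$ choices of $R \subseteq V(G)$; we need to show that a $(1/h^{r-\ell})(1 - o(1))$-fraction — no wait, we need the \emph{count} of good $R$ \emph{inside $V_i \cap V(X)$}, which is a set of $\sim m$ vertices out of $n$. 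Heuristically a random $(r-\ell)$-set lands in $V_i$ with probability $\sim (m/n)^{r-\ell}/h^{\cdots}$, giving expected degree $\sim (\dD^*_\ell(H)+\eps) m^{r-\ell} \cdot (\text{const})$; the point is that the normalization works out exactly so that the threshold is preserved with $+\eps/2$ instead of $+\eps$. This requires: (a) the $\ell$-set $L$ itself is transverse (so it has positive probability of lying in a single part — here $1/\ell > 1/r$, i.e.\ $\ell < r$, ensures $\ell \le h$ trivially and transversality is generic); (b) a second-moment or Chernoff-type concentration, \emph{but} the summands (over $R$) are not independent since they depend on which $F_0$-copies were selected into $X$ and on the partition $\mc{P}$. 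The cleanest route is a two-stage exposure: first handle the randomness of $X$ via a concentration inequality for the number of edges of $G$ induced on $V(X)$ through a fixed $L$ (McDiarmid or Azuma, since changing whether one $F_0$-copy is in $X$ changes the count by $O(m^{r-\ell-1})$), then handle the partition $\mc{P}$ similarly. One takes a union bound over $i \in [h]$ and over all $\le m^\ell$ choices of $L$.

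\textbf{Main obstacle.} The hardest part is suitability (ii): the naive union bound over colour classes is off by a polynomial factor, so one cannot afford to ask that $X(e,f) \ne \es$ for every same-coloured transverse pair directly. The fix is to observe that the definition only needs to be checked for pairs that are "relevant" — and to use the $\mu$-boundedness of the colouring (both clauses of Definition \ref{def:bdd}) to bound the number of relevant pairs linearly in $n^{r-1}$, with the crucial point that each relevant pair is "witnessed" by a specific vertex pair lying in few $F_0$-copies, so that the expected number of pairs $(e,f)$ with $X(e,f) = \es$ is $O(\mu) \cdot (\text{poly in } h) = o(1)$ as $\mu \ll 1/m$. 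Verifying that this counting is tight enough — in particular that $\mu$-boundedness controls not just single colour classes but the joint structure of same-coloured \emph{pairs} of edges through given vertices — is the delicate calculation, and it is exactly where the definition of $\mu$-bounded (as opposed to merely locally bounded) earns its keep.
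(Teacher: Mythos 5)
Your top-level decomposition matches the paper's: anticoncentration for $\{|X|=m\}$ (worth $\Omega(1/\sqrt m)$ by a binomial point-probability bound), plus showing each of the other three events fails with probability at most $1/m$, which is exactly how the paper assembles the final $1/m$. The problems are in the execution of the three remaining steps, and two of them are genuine gaps rather than omitted routine detail.

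For Definition \ref{def:suit}.i, your expectation estimate is not valid for a fixed $I$: $\mu$-boundedness controls the number of edges coloured with colours of $F_0$ only in aggregate (at most $\mu e(H)n^r/h$ of them), so for an individual $(r-1)$-set $I$ the set $B_I$ of bad vertices can have size $\Omega(n)$, and then $\bE|V(X)\cap B_I|=p|B_I|$ is not $\ll\eps m$. The paper first shows $X$ is unlikely to contain any $I$ from the small exceptional family $\{I:|B_I|>\eps n/10h\}$ and only then applies Talagrand to the rest; moreover, since $I$ ranges over subsets of the random set $V(X)$, the union bound must run over all $n^{r-1}$ sets $I\sub V(G)$ paired with the factor $\bP[I\sub V(X)]=p^{r-1}$, which requires $\{I\sub V(X)\}$ to be independent of $|V(X)\cap B_I|$ --- arranged in the paper by excluding $V(F_I)$ from $B_I$. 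Your sketch has neither the exceptional-set step nor this independence device (the same device is needed for the union bound over $L$ in the degree event). For Definition \ref{def:suit}.ii you talk yourself out of the correct argument: the direct first-moment bound over \emph{pairs} $(e,f)$ works. There are at most $O(\mu n^{2r-s})$ same-coloured transverse pairs with $|e\cap f|=s$, and $X(e,f)=\es$ forces all $2r-s$ distinct copies of $F_0$ meeting $e\cup f$ into $X$, an event of probability $p^{2r-s}$; the product is $O(\mu(hm)^{2r-s})\ll 1/m$. Your sketch circles this but never states the decisive computation (in particular, why the exponent of $p$ always at least compensates the exponent of $n$). Most seriously, for the degree event McDiarmid/Azuma over the $\sim n/h$ independent selection trials cannot work: the exponent is of order $(\eps m^{r-\ell})^2/\brac{N c^2}$ with $N\approx n/h$ trials and worst-case effect $c=\Omega(m^{r-\ell-1})$ per trial (indeed $c=\Omega(n^{r-\ell-1})$ without conditioning), so it tends to $0$ since $n\gg m$; and conditioning on $|X|=m$ to shrink $N$ destroys the independence the inequality requires. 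The paper instead applies Janson's lower-tail inequality to $d_i'(L)=\sum_J\mathbf{1}[J\sub V_i]$ with a dependency graph recording shared $F_0$-copies, whose exponent depends on $\bE[d_i'(L)]=\Theta(m^{r-\ell})$ and the correlation term $\DD=O(m^{2(r-\ell)-1})$ rather than on the number of trials, giving the needed $e^{-\Omega(m)}$. You would need to replace your proposed tool by Janson (or a Talagrand-type bound exploiting certifiability) for this step to close.
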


We conclude this section by showing how Theorem \ref{mainthm}
follows from the above lemmas.
\begin{proof}[Proof of Theorem~\ref{mainthm}]
By Lemma \ref{keylemma}, it suffices to show that for every 
$H$-factor $F_0$ of $G$ and $H_0 \in F_0$ there are at least 
$\gG n^{m-1}$ feasible $(H_0,F_0)$-switchings of size $m$.
There are $\binom{n/h-1}{m-1}\geq (n/mh - 1)^{m-1}$ 
partial $H$-factors $X$ of size $m$
with $H_0 \in X \sub F_0$.
By Lemma \ref{shuffle}, at least 
$m^{-1} (n/mh - 1)^{m-1} > \gG n^{m-1}$ of these
are suitable and have a transverse partition
$\mc{P} = (V_1,\dots,V_h)$ with all
$\dD_\ell(G[V_i]) \geq (\dD^*_\ell(H) + \eps/2) m^{r-\ell}$.
By Lemma \ref{feas}, each such $X$ has
an associated feasible $(H_0,F_0)$-switching.
\end{proof}

\section{Probabilistic methods}

In this section we collect various probabilistic tools that
will be used in the proofs of the lemmas stated in the previous section.
We start with a general version of the local lemma 
which follows easily from that given by Spencer \cite{S}.

\begin{defn} \label{def:dep}
Let $\cE$ be a set of events in a finite probability space.
Suppose $\GG$ is a graph with $V(\GG) = \mc{E}$ and $p \in [0,1]^{\mc{E}}$.
We call $\GG$ a \emph{$p$-dependency graph for $\cE$}
if for every $E \in \mc{E}$ and $\mc{E}' \sub \mc{E}$
such that $EE' \notin E(\GG)$ for all $E' \in \mc{E}'$ and $\bP[\cap_{E' \in \mc{E}'} \ov{E'}]>0$,
we have $\bP[E \vert \cap_{E' \in \mc{E}'} \ov{E'}] \leq p_E$.
\end{defn}

\begin{lemma} \label{L4}
Under the setting of Definition \ref{def:dep},
if $\sum \{ p_{E'} : EE' \in E(\GG) \} \le 1/4$
for all $E \in \cE$ then with positive probability
none of the events in $\cE$ occur.
\end{lemma}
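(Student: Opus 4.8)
The plan is to run the classical induction behind the (lopsided) Lov\'asz Local Lemma, phrased for the $p$-dependency graph formulation of Definition~\ref{def:dep}. The key statement to establish is the following: for every $E \in \cE$ and every $\cE' \sub \cE$ with $E \notin \cE'$ and $\bP\big[\bigcap_{E' \in \cE'} \ov{E'}\big] > 0$, one has $\bP\big[E \bigm| \bigcap_{E' \in \cE'} \ov{E'}\big] \le 2 p_E$. Granting this, the lemma follows quickly. By the usual convention each $E$ is counted among its own dependency neighbours, so the hypothesis in particular gives $p_E \le 1/4$ for all $E$. Enumerating $\cE = \{E_1,\dots,E_N\}$ and writing
\[
\bP\Big[\bigcap_{i=1}^N \ov{E_i}\Big] = \prod_{i=1}^N \Big(1 - \bP\big[E_i \bigm| \ov{E_1} \cap \cdots \cap \ov{E_{i-1}}\big]\Big),
\]
an induction on $i$ shows each partial intersection $\ov{E_1}\cap\cdots\cap\ov{E_i}$ has positive probability (so the expansion is legitimate), and by the key statement each factor is at least $1 - 2p_{E_i} \ge 1/2$; hence the product is positive.

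For the key statement I would induct on $|\cE'|$. The base case $\cE' = \es$ is immediate from Definition~\ref{def:dep} applied to the empty family. For the inductive step, split $\cE' = \cE'_1 \cup \cE'_2$, where $\cE'_1 = \{E' \in \cE' : EE' \in E(\GG)\}$ collects the dependency neighbours of $E$ in $\cE'$ and $\cE'_2 = \cE' \sm \cE'_1$. If $\cE'_1 = \es$, then $E$ is non-adjacent to all of $\cE'$ and Definition~\ref{def:dep} yields the bound $p_E \le 2p_E$ at once. Otherwise write $\cE'_1 = \{F_1,\dots,F_k\}$ and $B = \bigcap_{E' \in \cE'_2} \ov{E'}$ (the whole space if $\cE'_2 = \es$); since $\bigcap_{E'\in\cE'}\ov{E'} \sub B$ we have $\bP[B] > 0$, and we may write
\[
\bP\Big[E \bigm| \bigcap_{E' \in \cE'} \ov{E'}\Big] = \frac{\bP\big[E \cap \ov{F_1} \cap \cdots \cap \ov{F_k} \bigm| B\big]}{\bP\big[\ov{F_1} \cap \cdots \cap \ov{F_k} \bigm| B\big]}.
\]

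The numerator is at most $\bP[E \mid B] \le p_E$ by Definition~\ref{def:dep}, since $E$ is non-adjacent to every member of $\cE'_2$ and $\bP[B] > 0$. For the denominator, expand it as $\prod_{j=1}^k\big(1 - \bP[F_j \mid \ov{F_1}\cap\cdots\cap\ov{F_{j-1}}\cap B]\big)$; every conditioning event here contains $\bigcap_{E'\in\cE'}\ov{E'}$ and so has positive probability, and the index set $\{F_1,\dots,F_{j-1}\} \cup \cE'_2$ omits $F_j$ and has size at most $|\cE'|-1$, so the induction hypothesis bounds each conditional probability by $2p_{F_j}$. Hence the denominator is at least $\prod_{j=1}^k(1-2p_{F_j}) \ge 1 - 2\sum_{j=1}^k p_{F_j} \ge 1 - 2\cdot\tfrac14 = \tfrac12$, using $\sum_{F \in \cE'_1} p_F \le \sum\{p_{E'} : EE' \in E(\GG)\} \le 1/4$. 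Combining the two estimates gives $\bP[E \mid \bigcap_{E'\in\cE'}\ov{E'}] \le p_E/(1/2) = 2p_E$, closing the induction.

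I do not anticipate a genuine obstacle: this is the textbook Local Lemma argument (essentially that of Spencer~\cite{S}), only rephrased for Definition~\ref{def:dep}. The points demanding care are the bookkeeping ones — checking that every event one conditions on has positive probability, so that Definition~\ref{def:dep} and the chain-rule factorisations are valid, and checking that the index set strictly shrinks in each recursive call so the induction is well-founded. The numerical slack between the hypothesis bound $1/4$, the resulting denominator bound $1/2$, and the target $2p_E$ is exactly what makes these inequalities close up.
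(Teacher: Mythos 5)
Your proof is correct and is precisely the standard inductive argument that the paper itself omits, deferring instead to Spencer's paper; the key conditional bound $\bP[E \mid \cap_{E'\in\cE'}\ov{E'}]\le 2p_E$, the split of $\cE'$ into neighbours and non-neighbours of $E$, and the telescoping product at the end all match the intended route. Your interpretive choice that each event counts among its own dependency neighbours (so $p_E\le 1/4$) is the right one — it is needed for the final product to be positive, and it is consistent with how the paper verifies the hypothesis in Section 4, where the degree counts over events whose support meets a given vertex include the event itself.
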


We also require Talagrand's Inequality, see e.g.\ \cite[page 81]{MR}.

\begin{lemma} \label{talagrand2}
Let $X \ge 0$ be a random variable determined by $n$ independent trials, such that:
\begin{description}
\item[\ $c$-Lipschitz.] Changing the outcome of any one trial can affect $X$ by at most $c$.
\item[\ $r$-certifiable.] If $X \geq s$ then there is a set 
of at most $rs$ trials whose outcomes certify $X \geq s$.
\end{description}
Then for any $0 \leq t \leq \bE[X]$,
\begin{equation*}
\bP[|X-\bE[X]| > t + 60 c \sqrt{r \bE[X]}] \leq 4 e^{-t^2/(8c^2 r \bE[X])} . 
\end{equation*}
\end{lemma}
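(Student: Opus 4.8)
The plan is to derive Lemma~\ref{talagrand2} from Talagrand's convex distance inequality in the standard way; since the statement is classical it is also available off the shelf (e.g.\ \cite[page~81]{MR}), and in the paper I would simply cite it. Recall that for a product probability space $\Omega = \prod_{i=1}^n \Omega_i$ and $A \sub \Omega$, the convex (Talagrand) distance is $f_A(\omega) = \sup\{ \inf_{\omega' \in A} \sum_{i : \omega_i \neq \omega'_i} \alpha_i : \alpha \in \mathbb{R}_{\ge 0}^n,\ \|\alpha\|_2 = 1\}$, and Talagrand's inequality asserts $\bP[A]\cdot\bP[f_A \ge t] \le e^{-t^2/4}$ for all $t \ge 0$. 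The overall scheme has three steps: first use the $c$-Lipschitz and $r$-certifiable hypotheses to show that a large deviation of $X$ forces $f_A$ to be large for a suitable level set $A$; then feed this into Talagrand's inequality to obtain concentration of $X$ about a median $M$; and finally convert concentration about $M$ into concentration about $\bE[X]$.

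For the first step, consider the upper tail: fix a level $b$, put $A = \{X \le b\}$, and take any $\omega$ with $X(\omega) = s > b$. By $r$-certifiability there is an index set $I$ with $|I| \le rs$ certifying $X \ge s$, i.e.\ every outcome agreeing with $\omega$ on $I$ has $X \ge s$. For $\omega' \in A$, let $\eta$ agree with $\omega$ on $I$ and with $\omega'$ off $I$; then $X(\eta) \ge s$, while $\eta$ differs from $\omega'$ in only $d := |\{i \in I : \omega_i \neq \omega'_i\}|$ coordinates, so the $c$-Lipschitz property gives $s \le X(\omega') + cd \le b + cd$, whence $d \ge (s-b)/c$. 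Taking $\alpha$ uniform on $I$---crucially, this single $\alpha$ works for every $\omega' \in A$---yields $f_A(\omega) \ge d/\sqrt{|I|} \ge (s-b)/(c\sqrt{rs})$. Hence $\{X \ge s\} \sub \{ f_A \ge (s-b)/(c\sqrt{rs})\}$, and Talagrand's inequality gives $\bP[X \le b]\,\bP[X \ge s] \le \exp(-(s-b)^2/(4c^2 rs))$. Taking $b$ to be a median $M$ (so $\bP[X \le M] \ge 1/2$) gives $\bP[X \ge s] \le 2\exp(-(s-M)^2/(4c^2 rs))$; integrating such bounds shows $|\bE[X] - M| = O(c\sqrt{r\,\bE[X]})$, and since the hypothesis $t \le \bE[X]$ keeps $s := \bE[X] + t + O(c\sqrt{r\bE[X]})$ of order $\bE[X]$, one can absorb the median--mean shift into the exponent and match the explicit constants $60$ and $8$ in the statement with room to spare.

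The step I expect to be the real obstacle is the lower tail. The argument above works because a single weight vector $\alpha$, read off from the certificate of $\omega$, simultaneously witnesses the convex distance to every $\omega' \in A$; for the lower tail one takes $A = \{X \ge b\}$, and now the natural certificates depend on the point $\omega' \in A$ rather than on $\omega$, so no single $\alpha$ suffices and one must instead argue through the dual (measure) characterisation of the convex distance, as in \cite{MR}. Carefully tracking the constants through the median-to-mean conversion is the other fiddly point, but it is routine; as nothing here is new, citing \cite[page~81]{MR} is the honest resolution.
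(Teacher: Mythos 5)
The paper offers no proof of this lemma: it is quoted verbatim as a known tool with the citation \cite[page 81]{MR}, so your ultimate resolution (cite Molloy--Reed) is exactly what the paper does, and your sketch of the derivation from the convex distance inequality is the standard one. Your upper-tail argument is correct as written: the certificate $I$ is read off from the point $\omega$ with the \emph{large} value $X(\omega)=s$, the hybrid point $\eta$ shows every $\omega'\in A=\{X\le b\}$ differs from $\omega$ in at least $(s-b)/c$ coordinates of $I$, and the single weight vector $\alpha$ uniform on $I$ gives $f_A(\omega)\ge (s-b)/(c\sqrt{rs})$, hence $\bP[X\le b]\,\bP[X\ge s]\le \exp\brac{-(s-b)^2/(4c^2rs)}$.

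The one substantive error is your claim that the lower tail is a genuine obstacle requiring the dual (measure) characterisation of $f_A$. It is not: the product inequality you have already derived is symmetric in its use, because Talagrand's inequality bounds $\bP[A]\cdot\bP[f_A\ge t]$ and you are free to choose which of the two level sets carries the mass $\ge 1/2$. For the upper tail you set $b=M$ (a median) and let $s$ grow, reading the inequality as a bound on $\bP[X\ge s]$; for the lower tail you set $s=M$ and let $b$ shrink, reading the \emph{same} inequality as a bound on $\bP[X\le b]\le 2\exp\brac{-(M-b)^2/(4c^2rM)}$. In both cases the certificate and the weight vector $\alpha$ are attached to the point with the larger $X$-value, so a single $\alpha$ always suffices and no separate mechanism is needed. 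The remaining steps you describe---bounding $|\bE[X]-M|$ by $O(c\sqrt{r\bE[X]})$ by integrating the tail bounds, and using $t\le\bE[X]$ to replace $\sqrt{rs}$ by $\sqrt{r\bE[X]}$ at the cost of the constant $8$ in place of $4$---are indeed routine and account for the explicit constants $60$ and $8$.
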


Next we state an inequality of Janson \cite{J}.

\begin{defn} \label{def:strongdep}
Let $\{I_i\}_{i\in \cI}$ be a finite family of indicator random variables.
We call a graph $\GG$ on $\cI$ a strong dependency graph if 
the families $\{I_i\}_{i\in A}$ and  $\{I_i\}_{i\in B}$ are independent
whenever $A$ and $B$ are disjoint subsets of $\cI$
with no edge of $\GG$ between $A$ and $B$.
\end{defn}

\begin{theo} \label{janson} 
In the setting of Definition \ref{def:strongdep},
let $S=\sum_{i\in \cI}{I_i}$, $\mu=\bE[S]$, 
$\dD=\max_{i\in \cI}\sum \{ p_j : ij \in E(\GG) \}$ 
and $\DD = \sum \{ \bE[I_iI_j] : ij \in E(\GG) \}$. 
Then for any $0<{\eta} <1$, \[\bP[S<(1-{\eta}) \mu ]\leq 
\exp(-\min\{ ({\eta}\mu)^2/(8\Delta +2\mu), {\eta}\mu/(6\dD) \}).\]
\end{theo}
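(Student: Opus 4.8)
The plan is to prove Theorem~\ref{janson} by the exponential-moment method applied to $-S$, along the lines of the classical proofs of the Janson and Suen inequalities. For a parameter $t\ge 0$, Markov's inequality applied to the nonnegative variable $e^{-tS}$ gives
\[ \bP[S<(1-\eta)\mu] \;\le\; e^{t(1-\eta)\mu}\,\bE[e^{-tS}] . \]
Since each $I_i$ takes values in $\{0,1\}$ we have $e^{-tI_i}=1-\lambda I_i$ with $\lambda=\lambda(t)=1-e^{-t}\in[0,1)$, hence $\bE[e^{-tS}]=\bE\bigl[\prod_{i\in\cI}(1-\lambda I_i)\bigr]$. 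Thus everything reduces to an upper bound on this product expectation that is uniform in $\lambda\in[0,1)$; note that at $\lambda=1$ it is exactly $\bP[S=0]$, the quantity governed by the usual correlation inequalities.

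The key step is the estimate
\[ \bE\Bigl[\,\prod_{i\in\cI}(1-\lambda I_i)\,\Bigr] \;\le\; \exp\bigl(-\lambda\mu + \lambda^2 e^{2\lambda\dD}\,\DD\bigr), \]
which is Suen's inequality in the sharpened form due to Janson~\cite{J}, applied to the family $\{I_i\}$ independently ``thinned'' with retention probability $\lambda$ (equivalently, to the tilted system $\{e^{-tI_i}\}$, which has the same strong dependency graph $\GG$). I would simply quote it. If a self-contained derivation were wanted, one fixes an arbitrary ordering of $\cI$ and telescopes the product over the indices, at each stage isolating the neighbours of the current index $i$ in $\GG$---which, by Definition~\ref{def:strongdep}, carry the only correlations $I_i$ can have with the indicators already exposed---and paying a multiplicative correction for them; these corrections accumulate to the term $\DD=\sum_{ij\in E(\GG)}\bE[I_iI_j]$, while the conditioning on the neighbours' indicators contributes the factor $e^{2\lambda\dD}$, the sole point at which the parameter $\dD=\max_i\sum_{j\sim i}p_j$ enters.

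It remains to combine the two displays and to optimise over $t$. Using $1-e^{-t}\ge t-t^2/2$ and $(1-e^{-t})^2\le t^2$ (both valid for $t\ge 0$), the product of the two bounds is at most $\exp\bigl(-\eta t\mu + t^2(\tfrac12\mu + e^{2t\dD}\DD)\bigr)$. I would then split into two regimes according to whether the unconstrained minimiser $t^* \asymp \eta\mu/(\mu+\DD)$ of the exponent satisfies $t^*\dD=O(1)$. If it does, then $e^{2t^*\dD}=O(1)$ and optimising the resulting quadratic in $t$ yields a bound of the form $\exp\bigl(-(\eta\mu)^2/(8\DD+2\mu)\bigr)$; in particular when $\dD=0$ (forcing $\DD=0$) this is the Chernoff bound $\exp(-\eta^2\mu/2)$. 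In the complementary regime $t^*$ is too large for $e^{2t\dD}$ to remain bounded, so one instead takes $t$ at a threshold $\asymp 1/\dD$; there the cross term $t^2(\tfrac12\mu+e^{2t\dD}\DD)$ is dominated by half of $\eta t\mu$, and one obtains $\exp(-c\,\eta\mu/\dD)$. Choosing the numerical constants in the two cases so that the bounds match turns the two exponents into the stated minimum.

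The one genuinely non-routine ingredient is the correlation estimate of the middle paragraph---extracting that clean exponential bound from the dependency structure. If one is willing to invoke Suen's/Janson's inequality from \cite{J} directly, even that is off the shelf, and the remaining argument (the Laplace step and the two-regime optimisation) is elementary.
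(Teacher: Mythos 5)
The paper states Theorem~\ref{janson} without proof, as a result quoted directly from Janson's paper \cite{J}; your outline is a faithful reconstruction of Janson's own argument there (the Laplace transform of $-S$, the sharpened Suen-type bound on $\bE\prod_i(1-\lambda I_i)$, and the two-regime optimisation over $t$ producing the constants $8\Delta+2\mu$ and $6\dD$), and at the one non-routine step you propose to quote that same reference. So the proposal is correct and amounts to essentially the same approach the paper takes, namely deferring to \cite{J}.
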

  
We conclude with a standard bound on the
probability that a binomial is equal to its mean.

\begin{lemma}
\label{binbound}
Let $X$ be a binomial random variable 
with parameters $n$ and $p$ 
such that $np = m \in \bN$ and $m^2 = o(n)$.
Then $\bP[X = m] \geq 1/(4\sqrt{m})$.
\end{lemma}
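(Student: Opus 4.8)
The plan is to estimate $\bP[X=m]$ directly using Stirling's formula. First I would write $\bP[X=m] = \binom{n}{m} p^m (1-p)^{n-m}$ and substitute $p=m/n$, so that $\bP[X=m] = \binom{n}{m} (m/n)^m (1-m/n)^{n-m}$. Applying Stirling's approximation in the form $k! = \sqrt{2\pi k}\,(k/e)^k (1+o(1))$ to $n!$, $m!$ and $(n-m)!$, the factorials in $\binom{n}{m}$ combine with the powers of $m/n$ to leave a main term $\sqrt{\tfrac{n}{2\pi m (n-m)}}$ together with the factor $(1-m/n)^{n-m} \cdot \frac{n^{n-m}}{(n-m)^{n-m}} \cdot e^{-n+m} \cdot e^{m}/m^m \cdot m^m$; after cancellation the exponential and power factors reduce to $\left(\frac{n-m}{n}\right)^{n-m}\left(\frac{n}{n-m}\right)^{n-m} e^{-n+m}e^{m} = e^{0}$ up to the error controlled below, so that $\bP[X=m] = (1+o(1))\sqrt{\tfrac{n}{2\pi m (n-m)}}$.

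Next I would control the remaining exponential discrepancy. The one term that is not exactly cancelled is $\left(1-\tfrac{m}{n}\right)^{n} e^{m}$ (after regrouping), and using $\log(1-x) = -x - x^2/2 - \cdots$ with $x = m/n$ gives $n\log(1-m/n) = -m - \tfrac{m^2}{2n} + O(m^3/n^2)$. Since $m^2 = o(n)$, we have $\tfrac{m^2}{2n} = o(1)$ and $m^3/n^2 = o(1)$ as well, so this factor is $e^{-m+o(1)}\cdot e^m = 1+o(1)$. The same hypothesis $m^2 = o(n)$ ensures $n-m = n(1+o(1))$, hence $\sqrt{\tfrac{n}{2\pi m (n-m)}} = \sqrt{\tfrac{1}{2\pi m}}(1+o(1)) = \tfrac{1+o(1)}{\sqrt{2\pi m}}$.

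Combining, $\bP[X=m] = \tfrac{1+o(1)}{\sqrt{2\pi m}} \geq \tfrac{1}{\sqrt{8\pi m}}$ for all sufficiently large $m$, and since $\sqrt{8\pi} < 16 < 16$, crudely $\tfrac{1}{\sqrt{8\pi m}} > \tfrac{1}{4\sqrt{m}}$ is in fact \emph{false} as stated — so instead I would be slightly more careful and note $\sqrt{2\pi} \approx 2.5066 < 4$, hence $\tfrac{1+o(1)}{\sqrt{2\pi m}} \geq \tfrac{1}{4\sqrt{m}}$ holds once the $(1+o(1))$ factor exceeds $\sqrt{2\pi}/4 \approx 0.627$, which it does for large $m$; for the finitely many small values of $m$ one checks the bound directly (or absorbs them into the asymptotic hypotheses, since the lemma is only applied with $m$ large). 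The main obstacle is purely bookkeeping: tracking the error terms in Stirling carefully enough to see that the net correction is $1+o(1)$ under the hypothesis $m^2 = o(n)$, and being honest about the constant so that $1/(2\pi)^{1/2}$ indeed beats $1/4$ asymptotically. An alternative, avoiding constants entirely, is to use the Stirling bounds $\sqrt{2\pi k}(k/e)^k \le k! \le e\sqrt{k}(k/e)^k$ to get a clean two-sided estimate, which is the route I would ultimately take to keep the argument self-contained.
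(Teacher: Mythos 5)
Your proposal is correct and follows essentially the same route as the paper: a direct Stirling-type estimate of $\binom{n}{m}p^m(1-p)^{n-m}$, using $m^2=o(n)$ to control the exponential error term. The paper's version is slightly leaner --- it lower-bounds $\binom{n}{m}$ by $(n-m)^m/m!$ and applies only the one-sided Stirling bound $m!\le e^{1-m}m^{m+1/2}$, which yields the constant $e^{-1}>1/4$ immediately and sidesteps the $\sqrt{2\pi}$ bookkeeping you wrestled with.
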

\begin{proof}
The stated bound follows from
$\bP[X = m] = \tbinom{n}{m} p^m (1-p)^{n-m} 
\ge m!^{-1} (n-m)^m p^m (1-p)^{n-m} = m!^{-1} m^m (1-p)^n$,
$(1-p)^n = e^{-np + O(np^2)}$ and $m! \leq e^{1-m} m^{m+1/2}$.
\end{proof}

\section{Applying the local lemma} \label{sec:key}

In this section we prove Lemma \ref{keylemma},
which applies the local lemma to reduce the proof of 
Theorem \ref{mainthm} to finding many feasible switchings.

\begin{proof}[Proof of Lemma \ref{keylemma}]
Suppose that for every 
$H$-factor $F_0$ of $G$ and $H_0 \in F_0$ there are at least 
$\gG n^{m-1}$ feasible $(H_0,F_0)$-switchings of size $m$.
We need to show that $G$ has a rainbow $H$-factor.

We will apply Lemma \ref{L4} to
a uniformly random $H$-factor $\mc{H}$ in $G$, where 
$\mc{E} = \mc{A} \cup \mc{B}$ 
consists of all events of the following two types.
For every copy $H_0$ of $H$ in $G$ and any two edges 
$e$ and $f$ in $H_0$ of the same colour 
we let $A(e,f,H_0)$ be the event that $H_0 \in \mc{H}$;
we let $\mc{A}=\{A(e,f,H_0): \bP[A(e,f,H_0)]>0\}$.
For every pair $H_1, H_2$ of vertex-disjoint copies of $H$ in $G$
and edges $e_1$ of $H_1$ and $e_2$ of $H_2$ of the same colour 
we let $B(e_1,e_2,H_1,H_2)$ be the event that
$H_1 \in \mc{H}$ and $H_2 \in \mc{H}$;
we let $\cB={\{B(e_1,e_2,H_1,H_2): \bP[B(e_1,e_2,H_1,H_2)]>0\}}$. 
Then $\mc{H}$ is rainbow iff none of the events in $\cE$ occur.

We define the supports of $A=A(e,f,H_0)$ as $\supp(A)=V(H_0)$
and of $B={B(e_1,e_2},H_1,H_2)$ as ${\supp(B)=}V(H_1)\cup V(H_2)$. 
Let $\GG$ be the graph on $\cA\cup\cB$ where $E_1,E_2\in V(\GG)$ 
are adjacent if and only if $\supp(E_1)\cap\supp(E_2)\neq\es$.
Our goal is to show that there exist suitably small $p_\cA,p_\cB$ 
such that $\GG$ is a $p$-dependency graph for $\cA\cup \cB$, 
where $p_A=p_\cA$ for all $A\in \cA$ and $p_B=p_\cB$ for all $B\in \cB$. 
For $\cX \in \{\cA,\cB\}$, let $d_{\cX}$ be the maximum over $E \in V(\GG)$
of the number of neighbours of $E$ in $\cX$.
To apply Lemma \ref{L4}, it suffices to show 
$p_{\cA}d_{\cA}+p_{\cB}d_{\cB} \leq 1/4$.

To bound the degrees, we will first estimate the number of events in $\mc{A}$ 
and $\mc{B}$ whose support contains any fixed vertex $v\in V(G)$. 
We claim that there are at most $2^{{r+1}} h!  \mu n^{h-1}$
events $A(e,f,H_0){\in\cA}$ with $v \in V(H_0)$. 
To see this, first consider those events with $v \notin e \cup f$.
For any $s<r$, as the colouring is $\mu$-bounded, 
the number of choices of $e$ and $f$ of the same colour 
with $|e \cap f|=s$ is at most $n^r \cdot \tbinom{r}{s} \mu n^{r-s}$.
For any such $e$ and $f$ with $v \notin e \cup f$,
there are at most $h! n^{h-(2r-s+1)}$ copies of $H$ 
containing $e \cup f \cup \{v\}$, so summing over $s$
we obtain at most $2^r h! \mu n^{h-1}$ such events.
Now we consider events $A(e,f,H_0)$ with $v \in e \cup f$.
The number of choices of $e$ and $f$ of the same colour with $|e \cap f|=s$ 
and $v \in e \cup f$ is at most $n^{r-1} \cdot \tbinom{r}{s} \mu n^{r-s}$.
For any such $e$ and $f$ there are at most $h! n^{h-(2r-s)}$ copies of $H$ 
containing $e \cup f \cup \{v\}$, so summing over $s$
we obtain at most $2^{r+1} h! \mu n^{h-1}$ such events. The claim follows.

Similarly, we claim that there are at most {$2 (h!)^2 \mu n^{2h-2}$}
events $B(e_1,e_2,H_1,H_2){\in \cB}$ with $v \in V(H_1) \cup V(H_2)$. To see this, first consider those events 
with $v \in e_1 \cup e_2$. By definition of $\mathcal{B}$, we may consider only disjoint edges $e_1$, $e_2$.
There are at most $h!n^{h-r}$ choices for each of $H_1$ and $H_2$
given $e_1$ and $e_2$. Also, the number of choices for $e_1$ and $e_2$
is at most {$n^{r-1} \cdot \mu n^{r-1}= \mu n^{2r-2}$}.
Thus, we obtain at most $ (h!)^2 \mu n^{2h-2}$ such events.
A similar argument applies to events $B(e_1,e_2,H_1,H_2)$ 
with $v \notin e_1 \cup e_2$, and the claim follows.

In particular, there is some constant $C=C(r,h)$ so that 
\begin{align}\label{eq:d}
d_{\cA} < C \mu n^{h-1} 
\quad \text{ and } \quad
d_{\cB} < C \mu n^{2h-2}. 
\end{align}

Now we will bound $p_\cA$ and ${p_\cB}$ using switchings.
For $p_\cA$ we need to bound $\bP[A\mid \cap_{E \in {\cE'}} \ov{E}]$
for any $A=A(e,f,H_0) \in \cA$ and ${\cE'} \sub \mc{E}$
such that $AE \notin E(\GG)$ for all $E \in {\cE'}$
and $\bP[\cap_{E \in {\cE'}} \ov{E}]>0$.
Let $\cF$ be the set of $H$-factors of $G$
that satisfy $\cap_{E \in {\cE'}} \ov{E}$; then $\cF \ne \es$.
Let $\cF_0 = \{F_0 \in \cF: H_0 \in F_0\}$.
We consider the auxiliary bipartite multigraph $\cG_A$
with parts $(\cF_0, \cF \sm \cF_0)$, 
where for each $F_0 \in \cF_0$ and feasible 
$(H_0,F_0)$-switching $Y$ of size $m$
we add an edge from $F_0$ to $F$ 
obtained by replacing $F_0[V(Y)]$ with $Y$;
we note that $F \in \cF\sm \cF_0$ by Definition \ref{def:feas}.
Let $\dD_A$ be the minimum degree in  {$\cG_A$} of vertices in $\cF_0$
and $\DD_A$ be the maximum degree in {$\cG_A$} of vertices in $\cF \sm \cF_0$.
By double-counting the edges of $\cG_A$ we obtain
$\bP[A \mid \cap_{E \in  {\cE'}}  {\ov{E}} ] = |\cF_0|/|\cF| \le \DD_A/\dD_A$.

We therefore need an upper bound for $\DD_A$ and a lower bound for $\dD_A$.
By the hypotheses of the lemma, we have $\dD_A \ge \gG n^{m-1}$.
To bound $\DD_A$, we fix any $F\in \cF\sm\cF_0$ 
and bound the number of pairs $(F_0,Y)$ where $F_0 \in \cF_0$ 
and $Y$ is a feasible $(H_0,F_0)$-switching of size $m$ that produces $F$.
Each vertex of $V(H_0)$ must belong to a different copy of $H$ in $F$,
as otherwise there are no $(H_0,F_0)$-switchings that could produce $F$.
Thus we identify $h$ copies of $H$ in $F$ whose vertex set
must be included in $V(Y)$. There at most $n^{m-h}$ choices
for the other copies of $H$ to include in $V(Y)$
and then at most $(hm)!$ choices for $Y$, so $\DD_A \le (hm)! n^{m-h}$.
We deduce
\begin{align}\label{eq:pA}
\bP[A \vert \cap_{E \in  {\cE'}}  {\ov{E}} ] \leq (hm)! \gG^{-1} n^{1-h} =: p_\cA \;.
\end{align}

The argument to bound $p_\cB$ is very similar.
We need to bound $\bP[B\mid \cap_{E \in  {\cE'}} \ov{E}]$
for any $B=B(e_1,e_2,H_1,H_2) \in \cB$ and $ {\cE'} \sub \mc{E}$
such that $BE \notin E(\GG)$ for all $E \in  {\cE'}$
and $\bP[\cap_{E \in  {\cE'}} \ov{E}]>0$.
Let $\cF$ be the set of $H$-factors of $G$
that satisfy $\cap_{E \in  {\cE'}} \ov{E}$; then $\cF \ne \es$.
Let $\cF' = \{F' \in \cF: \{H_1,H_2\} \sub F'\}$.
We consider the auxiliary bipartite multigraph  {$\cG_B$}
with parts $(\cF', \cF \sm \cF')$, where there is an edge 
from $F' \in \cF'$ to $F$ for each pair $(Y,Z)$, 
where $Y$ is a feasible $(H_1,F')$-switching of size $m$
producing some $H$-factor $F''$
containing $H_2$ but not $H_1$,
and $Z$ is a feasible $(H_2,F'')$-switching of size $m$
with $V(Z) \cap V(H_1) = \es$ producing $F$;
note that then $F \in \cF \sm \cF'$.

We have $\bP[B \mid \cap_{E \in  {\cE'}}  {\ov{E}} ] \le \DD_B/\dD_B$,
where $\DD_B$ and $\dD_B$ are defined analogously to $\DD_A$ and $\dD_A$.
The condition $V(Z) \cap V(H_1) = \es$ 
rules out at most $hn^{m-2}$ choices of $Z$ given $H_1$,  {and similarly the condition
that $F''$ contains $H_2$ and not $H_1$ rules out at most $hn^{m-2}$ choices of $Y$ given $H_2$.}
So $\dD_B \ge (\gG n^{m-1} - hn^{m-2})^2 > \tfrac{1}{2}\gG^2 n^{2m-2}$.
Similarly to before we have $\DD_B \le ((hm)! n^{m-h})^2$, so
\begin{align}\label{eq:pB}
\bP[B \vert \cap_{E \in  {\cE'}}  {\ov{E}} ] \leq 2(hm)!^2 \gG^{-2} n^{2-2h} =: p_\cB \;.
\end{align}
Combining (\ref{eq:d}), (\ref{eq:pA}) and (\ref{eq:pB}) we have
$p_{\cA}d_{\cA}+p_{\cB}d_{\cB} \leq 1/4$, so the lemma follows from Lemma \ref{L4}.
\end{proof}

\section{Switchings} \label{sec:feas}

In this section we prove Lemma \ref{feas},
which shows how to obtain a feasible switching from 
a suitable partial $H$-factor and transverse partition
whose parts have high minimum degree.

\begin{proof}[Proof of Lemma \ref{feas}]
Let $F_0$ be an $H$-factor in $G$ and $H_0 \in F_0$.
Let $X \sub F_0$ be a suitable
partial $H$-factor in $G$ 
of size $m$ with $H_0 \in X$.
Let $\mc{P} = (V_1,\dots,V_h)$ be 
a transverse partition of $V(X)$ such that all
$\dD_\ell(G[V_i]) \geq (\dD^*_\ell(H) + \eps/2) m^{r-\ell}$.
We need to find a partial $H$-factor $Y$ in $G$ with $V(Y)=V(X)$
such that $Y$ is a feasible $(H_0,F_0)$-switching.

We construct $Y$ by successively choosing
$H$-factors $Y_i$ of $G[V_i]$ for $1 \le i \le h$.
For each $i$ we let $V'_i = V_i \sm V(H_0)$ and note that
$G[V'_i]$ is rainbow by Definition \ref{def:suit}.ii.
At step $i$, we let $G_i$ be the $r$-graph obtained from $G[V_i]$
by deleting all edges disjoint from $V(H_0)$ that share a colour
with any $H'$ in $F_0$ or $\cup_{j<i} Y_j$.
It suffices to show that $G_i$ has an $H$-factor $Y_i$,
as then $Y = \cup_{i=1}^h Y_i$ will be feasible.

By definition of $\dD^*_\ell(H)$, it suffices to show
for each $L \sub V_i$ with $|L|=\ell$ that
we delete at most $\tfrac{\eps}{2} m^{r-\ell}$ edges containing $L$.
We can assume $L$ is disjoint from $V(H_0)$,
as otherwise we do not delete any edges containing $L$.
There are $\tbinom{m-\ell}{r-1-\ell}$ choices of $I$
of size $r-1$ with $L \sub I \sub V_i$. 
For each such $I$, by Definition \ref{def:suit}.i,
the number of edges containing $I$ deleted due to sharing
a colour with any $H' \in F_0$ is at most $\eps m/4$.
Thus we delete at most $\tfrac{\eps}{4} m^{r-\ell}$
such edges containing $L$.

It remains to consider edges containing $L$
that are deleted due to sharing a colour
with any $H'$ in $\cup_{j<i} Y_j$.
As $G[V'_i]$ is rainbow, any colour in $\cup_{j<i} Y_j$
accounts for at most one deleted edge.
In the case $\ell \le r-2$ we can crudely bound
the number of deleted edges by the total number
of edges in $\cup_{j<i} Y_j$, which is at most 
$i e(H) m < m h^{r+1} < \tfrac{\eps}{4} m^{r-\ell}$.

Now we may suppose $\ell=r-1$. 
Consider any edge $e$ containing $L$ that is
deleted due to having the same colour as 
some edge $f$ in some $Y_j$ with $j<i$.
By Definition \ref{def:suit}.ii and $|e \sm L|=1$ there is
a copy $H'$ of $H$ in $X$ that intersects both $L$ and $f$.
To bound the number of choices for $e$,
note that there are $|L|=r-1$ choices for $H'$
and $i-1$ choices for $j$. These choices determine
a vertex in $V_j$, and so a copy of $H$ in $Y_j$,
which contains at most $h^{r-1}$ choices for $f$.
Then the colour of $f$ determines at most one deleted edge in $e$.
Thus the number of such deleted edges $e$ containing $L$ is at most
$(r-1)(i-1)h^{r-1} < \tfrac{\eps}{4} m$, as required.
\end{proof}

\section{Transverse partitions} \label{sec:shuffle}

To complete the proof of Theorem \ref{mainthm},
it remains to prove  Lemma \ref{shuffle},
which bounds the probability that a random partial $H$-factor
and transverse partition satisfy the hypotheses of Lemma \ref{feas}.

\begin{proof}[Proof of Lemma \ref{shuffle}]
Let $F_0$ be an $H$-factor in $G$ and $H_0 \in F_0$.
Let $X \sub F_0$ be a random partial $H$-factor 
where $H_0 \in X$ and each $H' \in F_0 \sm \{H_0\}$
is included independently with probability $p=\frac{m-1}{n/h-1} {\leq \frac{hm}{n}}$.
Let $\mc{P} = (V_1,\dots,V_h)$ be a uniformly random
transverse partition of $V(X)$. Note that each copy $H'$ of $H$ in $X$
has one vertex in each $V_i$, according to a uniformly random bijection
between $V(H')$ and $[h]$, and that these bijections are independent
for different choices of $H'$. Consider the events
\begin{align*}
\mc{E}_1 & = \{|X|=m\}, 
& \mc{E}_2 & = \{ X\ \text{satisfies Definition \ref{def:suit}.ii} \},\\
\mc{E}_3 & = \{ X\ \text{satisfies Definition \ref{def:suit}.i} \}, 
& \mc{E}_4 & = \cap_{i=1}^h \{ \dD_\ell(G[V_i]) 
\geq (\dD^*_\ell(H) + \eps/2) m^{r-\ell} \}.
\end{align*}
We need to show that $\mb{P}[\cap_{i=1}^4 \mc{E}_i] > 1/m$.
To do so, we first recall from Lemma~\ref{binbound}
that $\mb{P}[\mc{E}_1] \ge 1/(4\sqrt{m})$.
To complete the proof, we will show that
$\mb{P}[\mc{E}_i] \ge 1-1/m$ for $i=2,3,4$. Throughout, for $I \sub V(G)$ we let $F_I \sub F_0$ be the partial $H$-factor
consisting of all copies of $H$ in $F_0$ that intersect $I$.
\medskip

\nib{Bounding $\mb{P}[\mc{E}_2]$.}

For $s \in [r-1]$ let $\mc{Z}_s$ be the set of pairs $(e,f)$ 
of transverse edges disjoint from $V(H_0)$ of the same colour 
with $|e \cap f|=s$ and $X(e,f) = \es$.
As the colouring is $\mu$-bounded, we have 
$|\mc{Z}_s| \le n^r \cdot \tbinom{r}{s} \mu n^{r-s}$.
For any $(e,f) \in \mc{Z}_s$ we have $|F_{e \cup f}|=2r-s$,
so $\bP[e \cup f \sub V(X)] = p^{2r-s}$.
By a union bound, the probability that any such event holds is at most
$\sum_{s=1}^{r-1}  \tbinom{r}{s} \mu n^{2r-s} p^{2r-s}
< (hm)^r (hm+1)^r \mu < 1/2m$.

Similarly, let $\mc{Z}_0$ be the set of pairs $(e,f)$ 
of transverse edges disjoint from $V(H_0)$ of the same colour 
with $e \cap f=\es$ and $|X(e,f)| \le 1$.
As the colouring is $\mu$-bounded, we have 
$|\mc{Z}_0| \le n^r \cdot \mu n^{r-1}$.
For any $(e,f) \in \mc{Z}_0$, $|F_{e \cup f}| \ge 2r-1$ and $\bP[e \cup f \sub V(X)] \le p^{2r-1}$.
Thus the probability that any such event holds is at most
$\mu (hm)^{2r-1} < 1/2m$.

\medskip

\nib{Bounding $\mb{P}[\mc{E}_3]$.}

For any transverse $I \sub V(X) \sm V(H_0)$ with $|I|=r-1$
we let $B_I$ be the set of $v \in V(G) \sm (V(F_I) \cup V(H_0))$ such that 
$I \cup \{v\}$ is an edge sharing a colour with some $H' \in F_0$.
Write $Y_I = |V(X) \cap B_I|$. It suffices to bound the probability 
that there is any $I \sub V(X)$ with $Y_I > \eps m/5$. Indeed, the number of $v\in V(F_I) \cup V(H_0)$ such that 
$I \cup \{v\}$ is an edge is at most $rh < \eps m/20$.

First we show that $X$ is unlikely to contain any $I$
in $\mc{B} := \{ I: |B_I| > \eps n/10 {h} \}$. Indeed, as the colouring is $\mu$-bounded,
there are at most $e(F_0) \mu n^{r-1} = \mu e(H) n^r/h$  {edges with} colours in $F_0$,
so $|\mc{B}| < \mu \eps^{-2} n^{r-1}$.
For each transverse $I$ we have $\mb{P}[I \sub V(X)] = p^{r-1}$,
so by a union bound, the probability that $X$ contains any $I$ in $\mc{B}$ 
is at most $\mu \eps^{-2}  {(h m)}^{r-1} < 1/2m$.

Now for each $I \notin \mc{B}$ we bound $Y_I$ by Talagrand's inequality,
where the independent trials are the decisions for each
$H' \in F_0 \sm \{H_0\}$ of whether to include $H'$ in $X$.
As $I \notin \mc{B}$ we have $\bE[Y_I]= p |B_I| \leq \eps m/10$.
Also, $Y_I$ is clearly $h$-Lipschitz and $1$-certifiable.
We apply Lemma \ref{talagrand2} to $Y'_I = Y_I + \eps m/30$,
with $t = \eps m/30 \le \bE[Y'_I]$, $c=h$ and $r=1$ to deduce
$\bP[Y_I> \eps m/5] \leq 4e^{-10^{-4}h^{-2}\eps^2 m} < m^{-2r}$.

As we excluded $V(F_I)$ from $B_I$,
the events $\{I \sub V(X)\}$ and $Y_I > \eps m/5$ are independent,
so both occur with probability at most $p^{r-1} m^{-2r}$.
Taking a union bound over at most $n^{r-1}$ choices of $I$,
we obtain $\mb{P}[\ov{ {\cE}_3}] < 1/m$.

\medskip

\nib{Bounding $\mb{P}[\mc{E}_4]$.}

For $L \sub V(G)$ with $|L|=\ell$ and $i \in [h]$ we define
\[ \mc{J}_L  = \{J\sub V(G) \sm V(H_0): \ F_L\cap F_J=\es
\ \text{ and } \ L\cup J\in E(G) \text{ is transverse} \}.\]
We say $L$ is $i$-bad if $L \sub V_i$ and
$d_i'(L) := |\{J \in \mc{J}_L: J \sub V_i\}| 
< (\dD^*_\ell(H) + \eps/2) m^{r-\ell}$.
We will give an upper bound on the probability 
that there is any $i$-bad $L$.

First we note that the events
$\{L \sub V_i\}$ and $\{J \sub V_i\}$
are independent for any $J \in \mc{J}_L$.
There are at most $n^\ell$ choices of $L$
with $L \cap V(H_0) = \es$, each of which
has $\bP[L \sub V_i] = (p/h)^\ell$,
and at most $hn^{\ell-1}$ choices of $L$
with $|L \cap V(H_0)| = 1$, each of which has 
$\bP[L \sub V_i] \le (p/h)^{\ell-1}$.
By a union bound, it suffices to show 
for every transverse $L$ and $i \in [h]$ that
$\bP[d_i'(L) < (\dD^*_\ell(H) + \eps/2)  {m}^{r-\ell}] < m^{-2r}$.

We also note that $|\mc{J}_L| \ge (\dD^*_\ell(H) + 0.9\eps) n^{r-\ell}$, 
as there are at least $(\dD^*_\ell(H) + \eps) n^{r-\ell}$
choices of $J$ with $L \cup J \in E(G)$, 
of which the number excluded
due to $J \cap V(H_0) \ne \es$, $F_L\cap F_J \ne \es$
or $L \cup J$ not being transverse is at most
$hn^{r-\ell-1} + \ell hn^{r-\ell-1} 
+ \tfrac{n}{h} \tbinom{h}{2} n^{r-\ell-2} < 0.1 \eps n^{r-\ell}$.

We will apply Janson's inequality to $d_i'(L) = \sum_{J \in \mc{J}_L} I_J$, 
where each $I_J$ is the indicator of $\{J \sub V_i\}$.
As $\mb{P}[J \sub V_i] = (p/h)^{r-\ell}$ for each $J \in \mc{J}_L$,
we have $\mu = \mb{E}[d_i'(L)] > (\dD^*_\ell(H) + 0.9\eps) m^{r-\ell}$.
We use the dependency graph $\GG$ where 
$JJ'$ is an edge iff $F_J \cap F_{J'} \ne \es$.
Note that for any $J \in \mc{J}_L$ and $s \in [r-\ell]$
the number of choices of $J'$ with $|F_J \cap F_{J'}|=s$
is at most $\tbinom{r-\ell}{s} h^s n^{r-\ell-s}$, and for each
we have $\mb{P}[J \cup J' \sub V_i] =  {(p/h)}^{2(r-\ell)-s}$.
Thus we can bound the parameter $\DD$ in Theorem \ref{janson} as
$\DD \le |\mc{J}_L|  {\sum_{s=1}^{r-\ell}}\tbinom{r-\ell}{s} h^s n^{r-\ell-s}   {(p/h)}^{2(r-\ell)-s}
\le m^{r-\ell}\sum_{s=1}^{r-\ell}{{r-\ell \choose s}h^sm^{r-\ell-s} <  2h (r-\ell) m^{2(r-\ell)-1}}$.
We also have
$\dD \le  {\sum_{s=1}^{r-\ell}}\tbinom{r-\ell}{s} h^s n^{r-\ell-s}  {(p/h)}^{r-\ell-s} 
\le  {\sum_{s=1}^{r-\ell}{r-\ell \choose s}h^sm^{r-\ell-s}} <  {2h(r-l)m^{r-\ell-1}}$.
By Theorem~\ref{janson}, there is some constant 
$c=c(r,\eps,h)$ independent of $m$ so that
$\bP[d_i'(L) < (\dD^*_\ell(H) + \eps/2) {m}^{r-\ell}] 
< e^{-cm} < m^{-2r}$, as required.
\end{proof}

\section{Concluding remarks}

Our result and those of \cite{CG,GJ} suggest that
for any Dirac-type problem, the rainbow problem 
for bounded colourings should have asymptotically
the same degree threshold as the problem with no colours.
In particular, it may be interesting to establish this
for Hamilton cycles in hypergraphs
(i.e.\ a Dirac-type generalisation of \cite{DF}). 
The local resilience perspective 
emphasises analogies with the recent literature
on Dirac-type problems in the random setting (see the surveys \cite{B, sudakov}), 
perhaps suggests looking for common generalisations, 
e.g.\ a rainbow version of \cite{leesudakov}: 
in the random graph $G(n,p)$ with $p > C(\log n)/n$,
must any $o(pn)$-bounded edge-colouring of any subgraph $H$ 
with minimum degree $(1/2+o(1))pn$ have a rainbow Hamilton cycle?

\end{document}